\title{Model Checking CSL for Markov Population Models
\thanks{
This research has also been partially funded by the German Research Council (DFG) as part of the 
Cluster of Excellence on Multimodal Computing and Interaction at Saarland University and the 
Transregional Collaborative Research Center "Automatic Verification and Analysis of Complex 
Systems" (SFB/TR 14 AVACS), 
by the National Natural Science Foundation of
    China (NSFC) under grant No.  61361136002, 61350110518, and the
    Chinese Academy of Sciences Fellowship for International Young
    Scientists (Grant No.  2013Y1GB0006). 
}}
\author{David Spieler
\institute{Modeling and Simulation Group\\Computer Science Department\\Saarland University\\ Saarbr\"ucken, Germany}
\email{spieler@cs.uni-saarland.de}
\and
Ernst Moritz Hahn \qquad\qquad Lijun Zhang
\institute{State Key Laboratory of Computer Science\\Institute of Software\\Chinese Academy of Sciences\\
Beijing, China}
\email{\quad hahn@ios.ac.cn \quad\qquad zhanglj@ios.ac.cn}
}
\begin{document}
\maketitle

\begin{abstract}
Markov population models (MPMs) are a widely used modelling formalism
  in the area of computational biology and related areas. The
  semantics of a MPM is an \emph{infinite-state} continuous-time
  Markov chain.  In this paper, we use the established
  \emph{continuous stochastic logic} (CSL) to express properties of
  Markov population models. This allows us to express important
  measures of biological systems, such as probabilistic reachability,
  survivability, oscillations, switching times between attractor
  regions, and various others.  Because of the infinite state space,
  available analysis techniques only apply to a very restricted subset
  of CSL properties. We present a full algorithm for model checking
  CSL for MPMs, and provide experimental evidence showing that our
  method is effective.
\end{abstract}

\section{Introduction}
\label{sec:introduction}

In the context of continuous-time Markov chains (CTMCs), properties of
interest can be specified using continuous stochastic logic
(CSL)~\cite{AzizSSB00,BaierHHK03}. CSL is a branching-time temporal
logic inspired by CTL~\cite{EmersonC82}. It allows to reason about
properties of states (state labels), like the number of certain
molecules given in this state, about what may happen in the next
state (next operator), what may happen within a certain time (bounded until), what
may finally happen (unbounded until) or about the long-run average
behavior of a model (steady state). Because the underlying semantics
of a Markov population model is given as a CTMC, we can also use CSL
to reason about properties of such models, when interpreting CSL
formulae on the CTMC semantics.

We consider the complete set of CSL formulae, including the
steady-state operator and in certain cases also the unbounded until
operator~\cite{BaierHHK03}. The resulting logic can express (nested)
probabilistic properties such as \emph{``the long-run probability is
  at least 0.4 that we reach $\Psi$-states along $\Phi$-states
  within time interval [6.5,8.5] with a probability larger than
  $0.98$''} via $\steady_{\geq 0.4} (\probabilistic_{>0.98}( \Phi
\until^{[6.5,8.5]} \Psi))$. Using CSL, we can express many measures
important for biological models, including oscillation~\cite{BMM09}.

Previous works~\cite{Grassmann91,MoorselS94,MunskyK06} have already
considered techniques for the transient analysis of infinite-state
CTMCs. These techniques are based on \emph{truncation}. This means
that only a finite relevant subset of the states of the infinite CTMC
is taken into account. The extent to which these models are explored
depends on the rates occurring there, as well as on the time bound of
the transient analysis. Using truncation, techniques for the analysis
of finite CTMCs~\cite{Steward94} can be used for the analysis of
properties of infinite-state models.

In a previous publication~\cite{HahnHWZ09a}, we have extended these
results such that we were able to do approximate model checking for a
subset of CSL. This subset excluded the steady-state operator as well
as the unbounded until operator. We have implemented these techniques
in the model checker~\infamy~\cite{HahnHWZ09}. On the other hand, we
recently developed means to find subsets of states of Markov
population model CTMCs which contain the relevant steady-state
probability mass~\cite{dayar-hermanns-spieler-wolf10}. For each of
these states, we also obtain lower and upper bounds of the
steady state probability. These techniques have been implemented in
the tool~\geobound~\cite{geobound}.

In this paper, we combine~\geobound~and~\infamy, such that we can also
handle the CSL steady-state operator. In addition, we introduce advanced
truncation techniques which allow us to explore the model in a more
advanced way, leading to a smaller number of states being necessary to
check properties. By also taking into account not only the time bound
of CSL properties, but also the atomic propositions, we can further
restrict the state space to be explored. In certain cases, this also
allows us to handle the unbounded until operator. Using a ternary
logic~\cite{KatoenKLW07,Klink10}, in contrast to previous
publications, we can compute safe lower and upper bounds for
probabilities. In turn, we can decide exactly whether a certain
formula holds, does not hold or whether this cannot be decided on the
finite truncation of the current model. Apart from this, we have also
made some technical improvements, applying for instance to the
portability and robustness of~\infamy. We show the applicability of
the approach on a number of biological models.

\paragraph{Organization of the paper} We give background on Markov
population models, CTMCs and CSL in Section~\ref{sec:preliminaries},
and also recall the established CSL model checking algorithm for finite
CTMCs. In Section~\ref{sec:truncation}, we give the main contribution
of the paper, CSL model checking for infinite
CTMCs. Section~\ref{sec:experimental} reports experimental
results. Section~\ref{sec:related} gives related work and
Section~\ref{sec:conclusion} concludes.

\section{Preliminaries}
\label{sec:preliminaries}

\subsection{Ternary Logic}
We consider a ternary logic~\cite{KatoenKLW07,Klink10} with values
$\bternary := \{\vtrue, \vfalse, \vunknown\}$. With the ordering
$\vfalse < \ \vunknown < \vtrue$, $\bternary$ forms a complete
lattice. We interpret $\vand$ as the meet (``and'' operator), and
$\cdot^c$ as the complement (``not'') operation, with the usual
definitions. Other operators like $\vor$ (``or'' operator) can be
derived. Then, $\vtrue$ and $\vfalse$ can be interpreted as values
definitely true or false respectively, and $\vunknown$ is interpreted
as an unknown value. We give an overview of the truth values in
Table~\ref{tab:truth-values}. Consider a formula over a number of
values some of which are $\vunknown$. If the value of this formula is
different from $\vunknown$, we know that when inserting $\vfalse$ or
$\vtrue$ instead of some of these values, the result would still be
the same. This way, in some cases we can obtain a truth value even
though we do not known the truth values of some formula parts.

\begin{table}
\centering
\begin{tabular}{|c|ccc|}
  \hline
  $\vor$ & $\vfalse$ & $\vunknown$ & $\vtrue$ \\ \hline
  $\vfalse$ & $\vfalse$ & $\vunknown$ & $\vtrue$\\ 
  $\vunknown$ & $\vunknown$ & $\vunknown$& $\vtrue$\\
  $\vtrue$ & $\vtrue$& $\vtrue$& $\vtrue$\\ \hline
\end{tabular}\ \ \ 
\begin{tabular}{|c|ccc|}
  \hline
  $\vand$ & $\vfalse$ & $\vunknown$ & $\vtrue$ \\ \hline
  $\vfalse$ & $\vfalse$ & $\vfalse$ & $\vfalse$\\ 
  $\vunknown$ & $\vfalse$ & $\vunknown$& $\vunknown$\\
  $\vtrue$ & $\vfalse$& $\vunknown$& $\vtrue$\\ \hline
\end{tabular}\ \ \ 
\begin{tabular}{|c|c|}
  \hline
  & $C$ \\ \hline
  $\vfalse$ & $\vtrue$ \\
  $\vunknown$ & $\vunknown$ \\
  $\vtrue$ & $\vfalse$ \\ \hline
\end{tabular}
\caption{\label{tab:truth-values}Truth values of ternary logic operations.}
\end{table}

\subsection{Markov Chains}
We define some basics of Markov chains and CSL. Let $\AP$ denote a set
of atomic propositions. For a countable set $S$, a distribution $\mu$
over $S$ is a function $\mu: S\to [0,1]$ with $\mu(S) = 1$, and we
define $\mu(A) := \sum_{s\in A}\mu(s)$ for $A\subseteq S$.

\begin{definition}
  A \emph{labelled continuous-time Markov chain (CTMC)} is a tuple
  $\ctmc$ where $\states$ is a countable set of states, $\init$ is an
  initial state, $\labels: (\states \times \AP) \to \bternary$ is a
  labelling function, and $\R:(\states \times \states)\to \nnegreal$
  is the \emph{rate matrix}.
\end{definition}
In the stochastic process which the CTMC represents, $\init$ is the
state we start in. The labelling function tells us for each state $s$
and atomic proposition $a$ whether $a$ holds in $s$, does not hold in
$s$, or whether this is not known or not specified. We say that the
rate matrix is \emph{rate-bounded} if the supremum $\sup_{s \in
  \states}\R(s,\states)$ is finite, otherwise, it is called
\emph{rate-unbounded}. If $\R(s,s')>0$, we say that there is a transition
from $s$ to $s'$. For $s\in \states$, let $\post(s) := \{s'\mid
\R(s,s')>0\}$ denote the set of successor states of $s$. A state $s$
is called \emph{absorbing} if $\post(s)=\emptyset$. A CTMC is \emph{finitely
branching} if for each state $s$ the set of successors $\post(s)$ is
finite. In this paper, we consider rate-unbounded, finitely-branching
CTMCs which do not explode~\cite{Anderson91,Dijk88}. Roughly
speaking, if the CTMC explodes, then there is a positive, non-zero
probability of infinitely many jumps in finite time. On the contrary,
the fact that a CTMC does not explode implies that in finite time with
probability one only a finite number of states can be reached.

Transition probabilities in CTMCs are exponentially distributed
over time. The probability that an arbitrary transition is
triggered within time $t$ is given by $1-e^{-\R(s,\states) t}$, where
$\R(s,A):=\sum_{s'\in A}\R(s,s')$ for $A\subseteq \states$. The
probability of taking a particular transition from $s$ to $s'$ within
time $t$ is $\frac{\R(s,s')}{\R(s,\states)} \left(1-e^{-\R(s,\states)
    t}\right)$. State $s'$ is reachable from $s$ if there exists a
sequence of states $s_1, \ldots, s_n$ with $n\ge 1$, $s_1=s$,
$s_n=s'$, and for each $i=1, \ldots, n-1$ it is $\R(s_i,s_{i+1})>0$.

For a set of states $A$, by $\C[A]$ we denote the CTMC in which states
of $A$ have been made absorbing. For $\ctmc$ and $A \subseteq \states$
we have $\C[A] := (\states, \init, \R', \labels)$ with $\R'(s,s') :=
\R(s,s')$ for $s \notin A$ and $\R'(s,s') := 0$ else, for all $s,s' \in
S$. Given a rate matrix $\R$, we define the corresponding
\emph{infinitesimal generator matrix} $\Q$ such that for $s$, $s'$ if $s \neq
s'$ then $\Q(s,s') := \R(s,s')$ and we let $\Q(s,s) := - \sum_{s'\in S,
  s'\neq s} \R(s,s')$.

\paragraph{Paths and probabilistic measure}
Let $\ctmc$ be a CTMC. A path is a sequence $\mpath = s_1 t_1
s_2 t_2 \ldots$ satisfying $\R(s_i,s_{i+1})>0$, and $t_i \in
\nnegreal$. Paths are either infinite or have a last state $s_n$ with
$\R(s_n,S) = 0$. For the path $\mpath$ and $i\in \nat$, let
$\mpath[i]=s_i$ denote the $i$-th state, and let
$\delta(\mpath,i)=t_i$ denote the time spent in $s_i$. For $t\in
\nnegreal$, let $\mpath@t$ denote $\mpath[i]$ such that $i$ is the
smallest index with $t \le \sum_{j=1}^i t_j$. For $\C$, let
$\mpaths^\C$ denote the set of all paths, and $\mpaths^\C(s)$ denote the
set of all paths starting from $s$. A probability measure $\prob_s^\C$
on measurable subsets of $\mpaths^\C(s)$ is uniquely defined. We omit
the superscript $\C$ if it is clear from context.

\subsection{Markov Population Models}
In this paper, we will consider \emph{Markov population models} (MPMs),
a sub-class of CTMC, where states encode the number of individuals of
certain population types represented by a vector over the natural
numbers. Transitions between those states are defined by a change
vector that characterizes the successor state and a rate determined by
a propensity function that is evaluated in the predecessor state.

Formally, a MPM with $d$ population types is a CTMC $\ctmc$ with
$\states \subseteq \Nd$. The rate matrix $R$ will be induced by
\emph{transition classes}.
\begin{definition}[Transition Class]
  \label{def:tc}
  A \emph{transition class} is a tuple $(\alpha,v)$ where $\alpha: \Nd
  \rightarrow \Rp$ is the \emph{propensity function} and $v \in
  \Zd\setminus\{0\}$ is the \emph{change vector}.
\end{definition}
For the propensity functions, we will restrict to multivariate
polynomials in $\Nd$. Given a set of transition classes
$\{\tau_1,\dots,\tau_m\}$, we define the entries of $\R$ via $\R(x,x+v) = \sum_{\{j~|~v_j = v\}} \alpha_j(x)$ for $\tau_j
= (\alpha_j,v_j)$ and $1\leq j\leq m$. In order to ensure $\R$ is well
defined, we demand that for all states $x\in \states$ and all
transition classes $(\alpha_j,v_j)$ of our model $\alpha_j(x) > 0$
only holds if $x+v_j \in \states$. For the labelling $L$ we demand
that each state is labelled by those boolean expressions over
population counts and constants that evaluate to true. We also refer
to the populations by their name. For example, if we have a MPM with
populations $A$ ($x_0$) and $B$ ($x_1$), then state $(3,4)$ would be
labelled by e.g. $A \leq c$ for all $c \geq 3$, $A + B \leq 7$ and
$A^2 + B^2 < 30$ to name just a few.

\subsection{CSL Model Checking}
\label{sec:csl-model-checking}

We consider the logic CSL~\cite{BaierHHK03} interpreted over a ternary
logic~\cite{KatoenKLW07,Klink10}. Let $I=[t,t']$ be an interval with
$t,t'\in \nnegreal \cup \{\infty\}$ with $t'=\infty \Rightarrow t = 0$
and $t \le t'$. Let $p\in [0,1]$ and $\compare\ \in\{\le ,<,>,\ge
\}$. The syntax of state formulae ($\Phi$) and path formulae ($\phi$)
is:
\begin{eqnarray*}
  \Phi &=& a \mid \neg\Phi \mid \Phi\wedge \Phi \mid
  \probabilistic_{\compare p}(\phi) \mid \steady_{\compare p}(\Phi), \\
  \phi &=& \nextop^I \Phi \mid \Phi \until^I \Phi.
\end{eqnarray*}
Let $\cslfms$ be the set of all CSL formulae. The truth value
$\val[\C]{\cdot}: ((\states \cup \mpaths) \times \cslfms) \to \bternary$
of formulae is defined inductively in
Table~\ref{tab:csl-semantics}. If the model under consideration is
clear from the context, we leave out the index $\C$.
\begin{table}
  $\val{s,a} := \labels(s,a)$,
  $\val{s,\Phi_1 \wedge \Phi_2} := \val{s,\Phi_1} \vand
  \val{s,\Phi_2}$, $\val{s,\neg \Phi} := \val{s,\Phi}^c$, \\
  $\val{s,\probabilistic_{\compare p}(\phi)} {:=}
  \begin{cases}
    \vtrue, & \prob_s \{ \mpath {\mid} \val{\mpath,\phi}
    {=} \vtrue \} {\compare} p \wedge \prob_s \{ \mpath {\mid}
    \val{\mpath,\phi} {\neq} \vfalse \} {\compare} p, \\
    \vfalse, & \prob_s \{ \mpath {\mid} \val{\mpath,\phi}
    {=} \vtrue \} {\not\compare} p \wedge \prob_s \{ \mpath {\mid}
    \val{\mpath,\phi} {\neq} \vfalse \} {\not\compare} p, \\
    \vunknown & \mathrm{~else},
  \end{cases}$ \\
  $\val{s,\steady_{\compare p}(\Phi)} {:=}
  \begin{cases}
    \vtrue, & S_L(\Phi) {\compare} p \wedge S_U(\Phi) {\compare} p, \\
    \vfalse, & S_L(\Phi) {\not\compare} p \wedge S_U(\Phi)
    {\not\compare} p, \\
    \vunknown & \mathrm{~else},
  \end{cases}$ \\
  where \\
  $S_L(\Phi) := \lim\limits_{t \to \infty}\prob_s \{ \mpath \mid
  \val{\mpath@t,\Phi} = \vtrue \}$, \\
  $S_U(\Phi) :=\lim\limits_{t \to \infty} \prob_s \{ \mpath \mid
  \val{\mpath@t,\Phi} \neq \vfalse \}$, \\
  $\val{\sigma, \nextop^I \Phi} := (\vtrue \mathrm{~iff~}
  \delta(\mpath,0) \in I, \vfalse \mathrm{~else}) \vand
  \val{\mpath[1],\Phi}$, \\
  $\val{\sigma, \Phi_1 \until^I \Phi_2} {:=}
  \begin{cases}
      \vtrue, & \exists t {\in} I . \val{\sigma@t,
        \Phi_2} {=} \vtrue \wedge \forall t' {<} t . \val{\sigma@t',
        \Phi_1} {=} \vtrue, \\
      \vfalse, & \forall t {\in} I . \val{\sigma@t,
        \Phi_2} {=} \vfalse \vee \exists t' {<} t . \val{\sigma@t',
        \Phi_1} {=} \vfalse, \\
      \vunknown & \mathrm{~else}.
    \end{cases}$
  \caption{\label{tab:csl-semantics}CSL semantics.}
\end{table}
The model $\C$ satisfies a formula $\Phi$ if the initial state $\init$
does. We specify the \emph{unbounded until} as $\Phi_1 \until \Phi_2
:= \Phi_1 \until^{[0,\infty)} \Phi_2$ and the \emph{eventually
  operator} as $\Diamond^I \Phi := \mathit{true} \until^I \Phi$ where $\mathit{true}=a\vee \neg a$. Let
$B\subseteq \states$ be a set of states. For better readability, we
use the name of the set $B$ as an atomic proposition in formulae to
characterize that the system is in a state contained in $B$.

Model checking CSL formulae without the operators $\nextop$ and
$\steady$ on a finite CTMC over a ternary logic has already been used
before to handle a different abstraction technique for
CTMCs~\cite{KatoenKLW07,Klink10}. Adding routines to handle $\nextop$
and $\steady$ is straightforward.

\section{Model Checking based on Truncations}
\label{sec:truncation}

Now we discuss how to model check CSL formulae on infinite CTMCs. To
this end, our goal is to operate on a finite truncation instead of the
original infinite CTMC. In a nutshell, starting from the initial
state, we explore the states of the infinite model until we assume
that we have explored enough of them to obtain a useful result in the
following steps. Then, we remove all transitions from the states at
the border and set all of their atomic propositions to $\vunknown$. In
previous works~\cite{HahnHWZ09a} we already discussed some variants of
such a model exploration. In Subsection~\ref{sec:uniformisation} we
give another such technique. It is more efficient than the previous methods, as
it can explore model states in a more target-oriented way. Thus, it needs to
explore less states than the previous methods. Afterwards, we discuss how to
build a finite CTMC truncation for a nested CSL formula. Finally, in
Subsection~\ref{sec:nested}, we explain how to obtain results for the
infinite-state CTMC only using the finite submodel.

\subsection{Truncation-based Reachability Analysis}
\label{sec:uniformisation}

Given a set of states $\win_0$, a CTMC $\ctmc$ and CSL formula of the form
$\Phi = \probabilistic_{\compare p} (\Phi_1 \until \Phi_2)$, we want
to compute a finite submodel $\trunc{\win}$ sufficient to decide $\Phi$
on all states of $\win_0$. We define \emph{finite truncations} of a CTMC. 

\begin{definition}
  Let $\ctmc$ be a CTMC. Let $\win \subseteq \states$ be a finite subset of
  $\states$, and let $\overline{\win} := \post(\win) \setminus \win$. The
  \emph{finite truncation} of $\C$ is the finite CTMC $\truncdef{\win}$
  where $\labels_\win(s,\cdot) := \labels(s,\cdot)$ if $s \in \win$, and
  $L_\win(s, \cdot)=\ \vunknown$ else. The rate matrix is defined by
  $\R_\win(s,s') := \R(s,s')$ if $s\in \win$, and $\R_\win(s, \cdot) := 0$ else.
\end{definition}

We build the truncation of the model iteratively, using the high-level
(transition class) description of the model. Starting from states $A$,
we explore the model until for all $s \in \win_0$ the probability to
reach states in $\overline{\win}$ is below an accuracy $\epsilon$, which
we may choose as a fixed value or due to the probability bound $p$.

Algorithm~\ref{alg:transienttrunc} describes how we can obtain a
sufficiently large state set $\win$. For $s \in \states$, $s' \in
\overline{\win}$ and $t \in \nnegreal \cup \{\infty\}$ we use
$\transient_\C(s,t,s')$ to denote the probability that at time $t \in
\nnegreal$, the CTMC $\C$ is in state $s'$, under the condition that
it was in $s$ initially. For $t = \infty$, we let $\transient$ denote
the limit for $t \to \infty$ if this value exists. If $s$ will reach absorbing states with probability one, this
is the case. Further, for a set of absorbing states $B$, we let
$\reach_\C(s,t,B) := \sum_{s' \in B} \transient_\C(s,t,s')$ denote the
probability to reach $B$ within time $t \in \nnegreal \cup \{\infty\}$. Given a
fixed $s$ and $t$, we can compute $\transient_\C(s,t,s')$ for all $s'$ at once
effectively, and given $B$ and $t$ we can compute $\reach(s,t,B)$ for all $s$ at
once effectively~\cite{BaierHHK03}.

The algorithm is started on a CTMC $\C$ and a set of states $\win_0$, for
which we want to decide the property. We also provide the time bound
$t$ as well as the accuracy $\epsilon$. With $\hat{\win}$ we denote a
set of states for which the exploration algorithm may stop immediately, as
further exploration is not needed to decide the given property. For
$\Phi$ above and $I=[0,a],a\in\nnegreal \cup \{\infty\}$, we could
specify $\hat{\win}$ as the states which fulfill $\Phi_2 \vee (\neg
\Phi_1 \wedge \neg \Phi_2)$. For all paths of the model, the truth
value is independent of the states after such a state.

\begin{algorithm}
  \DontPrintSemicolon
  $\win := \win_0$ \;
  $\overline{\win} := \post(\win) \setminus \win$ \;
  \While {$\max_{s\in \win_0} \reach_{\C[\overline{\win}]}(s, t,
    \overline{\win} \setminus \hat{\win}) \geq \epsilon$}{
    choose $s$ from $\argmax_{s \in \win_0}
    \reach_{\C[\overline{\win}]}(s,t,\overline{\win} \setminus \hat{\win})$ \;
    \While {$\reach_{\C[\overline{\win}]}(s,t,\overline{\win} \setminus
      \hat{\win}) \geq \epsilon$}{
      choose $A \subseteq (\overline{\win} \setminus \hat{\win})$
      such that $\transient_{\C[\overline{\win}]}(s,t,A) \geq \epsilon$ \;
      $\win := \win \cup A$ \;
      $\overline{\win} := \post(\win) \setminus \win$ \;
    }
  }
  \Return $\win \cup (\post(\win) \cap \hat{\win})$

  \caption{\label{alg:transienttrunc}$\prog{TransientTrunc}(\C,\win_0,\hat{\win},t,\epsilon)$.}
\end{algorithm}

\subsection{Truncation-based Steady-state Analysis}
\label{sec:steady}

In the following, we will develop a technique to retrieve a finite subset
of states $\win \subseteq \states$ that contains most of the total steady-state
probability mass, i.e. $\sum_{c \in \win} \pi(c) > 1-\epsilon$ for a given 
$\epsilon < 1$. The next step will be to derive lower and upper bounds on 
the state-wise steady-state probabilities inside that window $\win$.

\paragraph{Geometric Bounds}
For the presented methodology to be applicable, we have to restrict
our models to \emph{ergodic} MPMs, since for steady-state analysis
the equilibrium distribution has to exist uniquely. 
Ergodicity can be verified by the means of Lyapunov functions and the 
following theorem. In the following, by $X(t)$ we refer to the stochastic process underlying
the MPM, and by $E$ to the expectation of a random variable.
\begin{definition}[Lyapunov Function]
A \emph{Lyapunov function} is a function $g: \Nd \rightarrow \Rp$.
\end{definition}
A suitable Lyapunov function which also we used for our experimental results is the
squared Euclidean norm $||\cdot||_2^2$ defined by $||x||_2^2 = x_1^2 + x_2^2 + \dots + x_d^2$
where $d$ is the number of population types.
\begin{theorem}[Tweedie~\cite{tweedie}]
\label{theorem:tweedie}
Assuming that $X(t)$ is \emph{irreducible}, it is also \emph{ergodic} and
uniquely determined by its infinitesimal generator iff there exists a
Lyapunov function $g^*$, a finite subset $\win\subseteq \states$, and a constant
$\lambda > 0$ such that
\begin{enumerate}
\item $\frac{\mathrm{d}}{\mathrm{d}t} E[g^*(X(t))~|~X(t) = x] \leq -\lambda$ for all $x\in\Nd\setminus\win$,
\item $\frac{\mathrm{d}}{\mathrm{d}t} E[g^*(X(t))~|~X(t) = x] < \infty$ for all $x\in\win$,
\item the set $\{x\in\Nd~|~g^*(x) \leq l\}$ is finite for all $l<\infty$.
\end{enumerate}
\end{theorem}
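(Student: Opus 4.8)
The quantity $\frac{\mathrm{d}}{\mathrm{d}t} E[g^*(X(t)) \mid X(t)=x]$ is exactly the generator applied to $g^*$, that is $(\Q g^*)(x) = \sum_{s'} \Q(x,s')\, g^*(s') = \sum_{j} \alpha_j(x)\bigl(g^*(x+v_j) - g^*(x)\bigr)$ when expressed through the transition classes. Hence condition (1) is the pointwise drift inequality $(\Q g^*)(x) \le -\lambda$ outside the finite set $\win$, which is the classical Foster--Lyapunov drift condition, and the theorem is the continuous-time analogue of Foster's criterion for positive recurrence.

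For the forward direction (a Lyapunov function implies ergodicity), the plan is to convert the drift inequality into a finite bound on the expected hitting time of $\win$. Let $\tau$ be the first time the process enters $\win$. Using Dynkin's formula together with condition (1), the process $M(t) := g^*(X(t \wedge \tau)) + \lambda\,(t \wedge \tau)$ is a supermartingale, since its drift is at most $-\lambda + \lambda = 0$ while $t < \tau$. Taking expectations and using $g^* \ge 0$ yields $E[t \wedge \tau \mid X(0)=x] \le g^*(x)/\lambda$; letting $t \to \infty$ and invoking monotone convergence gives $E[\tau \mid X(0)=x] \le g^*(x)/\lambda < \infty$ for every $x$. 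Because $\win$ is finite and the process is irreducible with finite expected return times to $\win$, standard positive-recurrence theory then yields existence and uniqueness of the stationary distribution, i.e. ergodicity. Condition (2) is what makes Dynkin's formula applicable on $\win$ (the drift is finite there, so $g^*(X(t))$ is integrable), and condition (3) ensures the sublevel sets of $g^*$ are finite, preventing escape to infinity while $g^*$ stays bounded and keeping the hitting-time argument well posed.

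For the converse (ergodicity implies a Lyapunov function), the plan is constructive: given the unique stationary distribution, I would fix a finite set $\win$ carrying most of the mass and define $g^*(x)$ essentially as the expected time (or cost) to reach $\win$ from $x$, i.e. a solution of the Poisson-type equation $\Q g^* = -\lambda$ off $\win$. Positive recurrence guarantees these expected hitting times are finite, and the one-step drift of this $g^*$ is by construction at most $-\lambda$ outside $\win$. The delicate part is verifying property (3), namely that $g^*(x) \to \infty$ as $x$ leaves every finite set; this has to be argued from irreducibility together with the fact that reaching $\win$ from states far out costs strictly more expected time.

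The main obstacle will be the converse direction, and condition (3) in particular. The forward direction is a routine supermartingale argument. In the converse, building a $g^*$ with finite sublevel sets on the non-compact space $\Nd$ is subtle: one must rule out sequences $x_n \to \infty$ along which expected hitting times remain bounded, and one must discharge the integrability and measurability prerequisites of Dynkin's formula for an unbounded, rate-unbounded generator. Since the statement is attributed to Tweedie, I would ultimately lean on the general Meyn--Tweedie Foster--Lyapunov machinery for continuous-time Markov chains to settle these technical points rather than re-derive them here.
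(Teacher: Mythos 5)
The paper does not prove this statement at all: it is imported verbatim from Tweedie's work (hence the citation in the theorem header), and the authors only \emph{use} it as a semi-decision procedure for ergodicity. So there is no in-paper proof to compare against; your proposal has to stand on its own.

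Your forward direction is the standard Foster--Lyapunov supermartingale argument and is essentially sound, modulo the usual localization: for a rate-unbounded generator you must stop not only at $\tau$ but also at the exit times of the finite sublevel sets $\{x \mid g^*(x)\le l\}$ to justify Dynkin's formula, and it is precisely condition (3) that makes these localizing times well behaved and rules out explosion. The genuine gap is in your converse. The construction $g^*(x) := E_x[\tau_\win]$ does \emph{not} in general satisfy condition (3), and the claim that irreducibility forces expected hitting times to grow as $x$ leaves every finite set is false. Counterexample: take a chain in which every state $x\neq s_0$ has a single transition to $s_0$ with rate $1$, while $s_0$ jumps to state $n$ with rate proportional to $2^{-n}$. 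This chain is irreducible, non-explosive and ergodic, yet $E_x[\tau_{\{s_0\}}]=1$ for every $x\neq s_0$, so the sublevel set $\{x \mid g^*(x)\le 1\}$ is the whole (infinite) state space and (3) fails. To obtain a norm-like $g^*$ one needs a different construction --- for instance a weighted hitting cost $E_x\bigl[\int_0^{\tau_\win} f(X(s))\,\mathrm{d}s\bigr]$ for an unbounded $f$, or the resolvent and skeleton-chain machinery of Meyn and Tweedie --- and one must then re-verify condition (2) on $\win$, which imposes a moment condition on the jump distribution out of $\win$. Deferring to the Meyn--Tweedie framework is a legitimate way to close this, but as written your converse outline would not go through.
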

Therefore, given a MPM $\ctmc$ induced by a set of transition classes
$\{\tau_1,\dots,\tau_m\}$, we can use Theorem \ref{theorem:tweedie} for
a semi-decision procedure to check ergodicity by choosing candidates for Lyapunov
functions. Our experience has shown that in most cases rather simple functions, i.e.
multivariate polynomials of degree two, already suffice for usual models from systems biology
and queuing theory. Consequently, we restrict the choice of Lyapunov functions
to that class.

We can exploit Theorem~\ref{theorem:tweedie} also to retrieve the
aforementioned window $\win$ that encloses most of the steady-state
probability mass.
At first, let the \emph{drift} $d^*(x)$ in state $x\in\Nd$ be defined as
\[
d^*(x) := \frac{\mathrm{d}}{\mathrm{d}t} E[g^*(X(t))~|~X(t)=x] = (\Q g^*)(x).
\]
Due to the transition class induced structure of $\Q$, the drift is given as
\[
d^*(x) = \sum_{j=1}^{m}\alpha_j(x)(g^*(x+v_j) - g^*(x))
\]
and can easily be represented symbolically in a state $x$.
Since the propensity functions of our models as well as the Lyapunov
function are multivariate polynomials, so is the drift. The next step is
to retrieve a positive real number $c \geq \max_{x\in\Nd} d^*(x)$. In order to
find that global maximal drift, common global optimization techniques like
gradient based methods and simulated annealing can not be used, since there
is no guarantee to get the real global maximum. What we propose instead is
to solve $\nabla d^*(x)|_p = 0$ for all $p \in 2^{\{x_1,\dots,x_m\}}$, where $f(x)|_p$
denotes the projection of the multivariate function $f$ onto the subspace spanned
by $x_i \in p$ to retrieve all $K$ possible candidates $m_k$. 
In order to solve the equation systems $\nabla d^*(x)|_p = 0$, we suggest the use
of the polyhedral homotopy continuation method, which is guaranteed to find
all roots. For implementations, we refer to \cite{phom} and \cite{hom4ps}.
Finally, after restricting the candidate set to $M = \{m_k~|~m_k \in
\Rp^d\}$, we set $c = \max_{m \in M} d^*(m)$.
Please note, that due to the existence of a maximal
value $c$, the chosen Lyapunov function serves as a witness for ergodicity
assuming an irreducible MPM.
By scaling the Lyapunov function by $\frac{1}{c+\gamma}$
we retrieve the normalized drift
\[
d(x) = \frac{\mathrm{d}}{\mathrm{d}t} E [g(X(t))~|~X(t) = x]
\]
and using conditions 1 and 2 from Theorem \ref{theorem:tweedie} we get
\begin{equation}
\label{eq:drift}
d(x) \leq \frac{c}{c+\gamma} - \bar\chi_\win(x),
\end{equation}
where $\bar\chi_\win(x) = 1$ if $c\not\in\win$ and $0$ else. Multiplying
Inequality \ref{eq:drift} with $\pi(x)$ and summing over $x$ leads us
to
\[
\pi(\bar\win) = \sum_{x\not\in\win}\pi(x) \leq \frac{c}{c+\gamma}.
\]
Convergence of this sum is guaranteed for (infinite) ergodic MPMs as stated in \cite{glynn}.
Consequently, we can exploit this inequality by directly choosing $d(x)=\frac{\epsilon}{c}d^*(x)$,
i.e. setting $\lambda > 0$ such that $\epsilon=\frac{c}{c+\gamma}$,
to get $\pi(\win) > 1-\epsilon$ for
\[
\win = \{x\in\Nd~|~d(x) > \epsilon -1\}.
\]

\paragraph{State-wise Bounds}
Given our state space window $\win$, our next goal is to get lower ($l(x)$)
and upper ($u(x)$) bounds on the steady-state probabilities inside $\win$, i.e.
probability vectors $l$ and $u$ such that $l(x) \leq \pi(x) \leq u(x)$ for all
$c\in\win$. For this, we will employ the methodology developed by Courtois and
Semal \cite{courtoissemal}~\cite{courtois} which we have extended to
infinite state MPMs in \cite{dayar-hermanns-spieler-wolf10}:
\begin{theorem}[\cite{dayar-hermanns-spieler-wolf10}]
\label{theorem:sbounds}
Let $\Q$ be the infinitesimal generator of an ergodic CTMC $X(t)$ with countably infinite
state space $\states$ and let $\win\subseteq \states$ be a finite subset of the state space.
Further, we let the matrix $\mathbf{C}$ be the finite submatrix of $\Q$ containing exactly the states
in $\win$. If by $\mathbf{U}$ we refer to the uniformized CTMC of
$\mathbf{C}$, i.e.
\[
\mathbf{U} = \mathbf{I} + \alpha^{-1}\mathbf{C} \text{ with } \alpha >
\max_i -\mathbf{C}(i,i),
\]
then for all $x\in\win$ we have
\[
\min_j \pi^{\mathbf{U_j}}(x) \leq \frac{\pi(x)}{\sum_{x\in\win}
  \pi(x)} \leq \max_j \pi^{\mathbf{U_j}}(x)
\]
where $\pi^\mathbf{U_j}$ is the steady-state distribution of matrix
$\mathbf{U_j}$ which is matrix $\mathbf{U}$ made stochastic by
increasing column $j$.
\end{theorem}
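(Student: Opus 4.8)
The plan is to reduce the infinite-state claim to the classical finite-dimensional Courtois--Semal bounds by passing through the \emph{censored} (watched) chain on $\win$, thereby avoiding any uniformisation of the rate-unbounded infinite generator. Write $\Q$ in block form with respect to the partition $\states = \win \cup \bar\win$, so that $\mathbf{C} = \Q_{\win\win}$ is the finite $\win$-block and $\Q_{\bar\win\bar\win}$ is the (infinite) generator of the sub-process on $\bar\win$ with $\win$ made absorbing. First I would consider the process obtained by observing $X(t)$ only while it resides in $\win$. By ergodicity this watched process is a \emph{finite} CTMC whose generator is the Schur complement $\tilde\Q := \mathbf{C} + \Q_{\win\bar\win}(-\Q_{\bar\win\bar\win})^{-1}\Q_{\bar\win\win}$, and whose unique stationary distribution is exactly the normalised restriction $\hat\pi(x) := \pi(x)/\sum_{x\in\win}\pi(x)$. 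Since $-\Q_{\bar\win\bar\win}$ is the fundamental matrix of a transient sub-process, its inverse is nonnegative, so the off-diagonal entries of $\tilde\Q$ are nonnegative and its row sums vanish; $\tilde\Q$ is a genuine generator on $\win$.

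Next I would uniformise the finite censored generator using the rate $\alpha$ of the theorem. Because $\alpha > \max_i -\mathbf{C}(i,i)$, the matrix $\tilde P := \mathbf{I} + \alpha^{-1}\tilde\Q$ is stochastic on $\win$ and shares the stationary distribution $\hat\pi$ with $\tilde\Q$. Expanding the Schur complement gives $\tilde P = \mathbf{U} + R$, where $\mathbf{U} = \mathbf{I} + \alpha^{-1}\mathbf{C}$ is the substochastic matrix of the theorem and $R := \alpha^{-1}\Q_{\win\bar\win}(-\Q_{\bar\win\bar\win})^{-1}\Q_{\bar\win\win} \ge 0$. Writing $r := \mathbf{1} - \mathbf{U}\mathbf{1} \ge 0$ for the row-deficiency vector of $\mathbf{U}$, stochasticity of $\tilde P$ forces $R\mathbf{1} = r$. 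Thus $\hat\pi$ is exhibited as the stationary distribution of a \emph{stochastic completion} $\mathbf{U}+R$ of $\mathbf{U}$ that returns exactly the missing mass $r$ to states of $\win$.

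With this identification the remainder is finite linear algebra on $\win$, following the classical Courtois--Semal argument verbatim. Since the ergodic chain leaves $\win$ from every state, $\mathbf{U}$ has spectral radius below $1$, so $(\mathbf{I}-\mathbf{U})^{-1} = \sum_{k\ge 0}\mathbf{U}^k$ exists and is nonnegative. The extremal completion $\mathbf{U}_j = \mathbf{U} + r\,e_j^{\top}$ (``$\mathbf{U}$ made stochastic by increasing column $j$'') satisfies $\pi^{\mathbf{U}_j}(\mathbf{I}-\mathbf{U}) = (\pi^{\mathbf{U}_j} r)\,e_j^{\top}$, whence $\pi^{\mathbf{U}_j}$ is the $j$-th row of $(\mathbf{I}-\mathbf{U})^{-1}$, normalised to sum to one. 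Writing the stationary equation $\hat\pi(\mathbf{I}-\mathbf{U}) = \bigl(\hat\pi\,\mathrm{diag}(r)\bigr)W$, where $W$ collects the per-row return distributions of $R$, and multiplying on the right by $(\mathbf{I}-\mathbf{U})^{-1}$ expresses $\hat\pi$ as a nonnegative combination of the normalised rows of $(\mathbf{I}-\mathbf{U})^{-1}$, i.e. of the vectors $\pi^{\mathbf{U}_j}$. Because $\hat\pi$ and each $\pi^{\mathbf{U}_j}$ are probability vectors, this combination is convex: $\hat\pi = \sum_j \lambda_j\,\pi^{\mathbf{U}_j}$ with $\lambda_j \ge 0$ and $\sum_j \lambda_j = 1$. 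Reading this coordinate-wise yields $\min_j \pi^{\mathbf{U}_j}(x) \le \hat\pi(x) \le \max_j \pi^{\mathbf{U}_j}(x)$ for every $x\in\win$, which is the claim.

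I expect the main obstacle to be the infinite-tail part of the first step: justifying that the Schur complement $\Q_{\win\bar\win}(-\Q_{\bar\win\bar\win})^{-1}\Q_{\bar\win\win}$ is well defined, finite, and nonnegative when $\bar\win$ is infinite, and that the censored chain indeed has stationary distribution $\hat\pi$. In the finite case this is Meyer's stochastic-complement theorem, but here $(-\Q_{\bar\win\bar\win})^{-1}$ is the fundamental matrix of an infinite transient generator and must be shown to have finite entries with the correct row sums. This is precisely where ergodicity (positive recurrence) enters: since the finite set $\win$ is visited infinitely often almost surely, every excursion into $\bar\win$ returns to $\win$ with probability one and in finite expected time, so $-\Q_{\bar\win\bar\win}$ is invertible with nonnegative inverse and $R\mathbf{1} = r$. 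Once this is secured, the finite argument of the third step closes the proof unchanged.
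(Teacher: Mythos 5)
Your argument is correct and matches the route the paper itself relies on: the theorem is not proved here but imported from \cite{dayar-hermanns-spieler-wolf10}, whose method is exactly what you reconstruct --- pass to the censored chain on $\win$ (whose stationary law is the normalised restriction of $\pi$ and whose uniformisation is a nonnegative stochastic completion $\mathbf{U}+R$ of $\mathbf{U}$), then apply the Courtois--Semal convex-hull bound, which exhibits that stationary vector as a convex combination of the $\pi^{\mathbf{U}_j}$. You also correctly identify, and adequately discharge via positive recurrence, the only genuinely new point in the infinite-state setting, namely the well-definedness and nonnegativity of the Schur complement over the infinite complement $\states\setminus\win$.
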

As presented in the previous paragraph, we have $1-\epsilon < \sum_{x\in\win}\pi(x) \leq 1$.
Consequently, we can use the geometric bounding technique to
obtain the unconditional state-wise bounds from Theorem
\ref{theorem:sbounds}, that is to retrieve for all states $x\in\win$
\[
l(x) = (1-\epsilon) \min_j \pi^{\mathbf{U_j}}(x) \leq \pi(x) \leq
\max_j \pi^{\mathbf{U_j}}(x) = u(x).
\]
\paragraph{Geobound}
All these presented techniques, i.e., the retrieval of geometric bound via 
Lyapunov functions and the computation of state-wise steady-state bounds
via the methodology of the previous paragraph have been implemented in a
prototypical tool called \geobound~\cite{geobound}.

\subsection{Truncation-based CSL Model Checking}
\label{sec:nested}

Given a CTMC $\C$, we want to check whether $\C$ satisfies
$\Phi$. This is done in two phases. At first, we construct a finite
truncation that is sufficient to check the formula. To this end, we
employ an algorithm to determine suitable truncations. The states
explored depend on the specific CSL formula analyzed. The computation
works by recursive descent into sub-formulae. The most intricate
formulae are the probabilistic operators, for which we use the
technique from Section~\ref{sec:uniformisation}. After the
exploration, we can compute $\val{\init, \Phi}$ on the finite
truncation.

\begin{algorithm}
  \DontPrintSemicolon
  \SetKwBlock{Function}{function}{end}
  \Function({$\prog{Trunc}(\C,\win,\Phi)$}){
    \Switch{$\Phi$}{
      \lCase{$a$}{\Return $\win$} \;
      \lCase{$\neg\Psi$}{\Return $\prog{Trunc}{(\C,\win,\Psi)}$} \;
      \lCase{$\Phi_1\wedge\Phi_2$}{\Return
        $\prog{Trunc}(\C,\win,\Phi_1)\cup \prog{Trunc}(\C,\win,\Phi_2)$} \;
      \lCase{$\probabilistic_{\compare p}(\nextop^I\Psi)$}{\Return
        $\prog{Trunc}(\C,\win\cup\overline{\win},\Psi)$} \;
      \uCase{$\probabilistic_{\compare p}(\Phi_1\until^{[t,t']}\Phi_2)$}{
        $\win_t =  \prog{TransientTrunc}(\C,\win,\canstop(\Phi),t,\epsilon)$\\
        $\win_{t'} =  \prog{TransientTrunc}(\C,\win_t,\canstop(\Phi),t'-t,\epsilon)$\\
        \Return $\prog{Trunc}(\C,\win_{t'},\Phi_1) \cup
        \prog{Trunc}(\C,\win_{t'},\Phi_2)$}
      \lCase{$\steady_{\compare p}(\Psi)$}{ \Return
        $\prog{Trunc}(\C,\post(\win) \setminus \win,\Psi)$} \;
    }
  }
  \Return $\TC{\prog{Trunc}(\C, \{\init\}, \Phi)}$ \;
  
  \caption{\label{alg:explore-csl}$\prog{Truncate}(\ctmc,\Phi)$. CSL state space
    exploration.}
\end{algorithm}

Algorithm~\ref{alg:explore-csl} describes the exploration
component. Given a CTMC $\ctmc$ be a CTMC and a state formula $\Phi$,
we call $\prog{Trunc}(\C, \{\init\}, \Phi)$. Afterwards, we can use
the CSL model checking algorithm for a ternary logic on the model
obtained this way. With $\canstop(\Phi)$ we denote a set of states for
which we can stop the exploration immediately, as exemplified in
Section~\ref{sec:uniformisation}. For nested formulae, this value is
computed by a simple precomputation in a recursive manner.

We employ ternary CSL model checking on the finite model
obtained. However, we have already obtained state-wise bounds on the steady-state
probabilities beforehand using the approach presented in 
Section~\ref{sec:steady} and implemented in the tool \geobound. 
Thus, to obtain the lower bound
probabilities of $\steady_{\compare p}(\Psi)$, we sum up the
lower bound steady-state probabilities of states $s\in \win$ with
$\val{x,\Psi} = \vtrue$. For the upper probability bound, we sum upper
steady-state probabilities of states $s$ with $\val{x,\Psi} \neq \vfalse$
and add the probability $\epsilon$ that limits the steady-state
probability outside $\win$. The probabilities computed are the
probabilities for all states, because the model is ergodic.

\paragraph{Correctness} Consider a truncation
$\trunc{\win}$ constructed for the input CTMC $\mathcal{C}$ and a state formula $\Phi$. If
we obtain the truth value $\val{s,\Phi}
\neq \vunknown$ in $\mathcal{C}_\win$, then this is also the truth value in $\C$: The
correctness is independent of the exploration algorithm, which
plays a role for performance and applicability of the approach. If too
many states are explored, we may run out of memory, whereas if too few are
explored, we are unable to decide the value in the original model.
As a result, the correctness of the algorithm for CSL without steady state
  follows by giving a simulation relation~\cite[Definition
  3.4.2]{Klink10} between $\C$ and $\TC{\win}$ and \cite[Theorem
  4.5.2]{Klink10}. The correctness of the steady-state extension
  follows as we give safe upper and lower bounds in exactly the same
  way as it is done in \cite[Theorem 4.5.2]{Klink10}.

\section{Experimental Results}
\label{sec:experimental}

Using several case studies, we assess the effectiveness of our
technique. For that, we have combined the tool~\geobound~\cite{geobound} to compute
bounds on steady-state probabilities for Markov population models with
the infinite-state model checker $\infamy$~\cite{HahnHWZ09}. This way,
we can effectively handle the combination of models and properties
described in this paper. To show the efficiency of the approach, we
applied our tool chain on a number of models from different areas. The
results were obtained on an Ubuntu 10.04 machine with an Intel
dual-core processor at 2.66~GHz equipped with 3~GB of RAM. The tools
used are available\footnote{
  \texttt{\url{http://alma.cs.uni-saarland.de/?page_id=74}}}. Instead of the
truth value for the formula under considerations, in the result tables
we give intervals of the probability measure of the outer
formula. 
Note that in case of a single (ergodic) strongly connected component, there is a unique 
steady state distribution and it suffices to state a single pair of lower and upper 
probability bounds for a CSL formula with outer steady state operator since the validity is the
same for each state.
We make use of a derived operator for conditional steady-state measures defined as 
\[
\val{s,\steady_{\compare p}(\Phi_1~|~\Phi_2)} {=}
\begin{cases}
    \vtrue, & S_L(\Phi_1{\mid}\Phi_2) {\compare} p {\wedge} S_U(\Phi_1{\mid}\Phi_2) {\compare} p, \\
    \vfalse, & S_L(\Phi_1{\mid}\Phi_2) {\not\compare} p {\wedge} S_U(\Phi_1{\mid}\Phi_2)
    {\not\compare} p, \\
    \vunknown & \mathrm{~else,}
  \end{cases}
\]
where
\begin{eqnarray*}
S_L(\Phi_1~|~\Phi_2) &:=& \frac{S_L(\Phi_1\land\Phi_2)}{S_U(\Phi_2)}, \\
S_U(\Phi_1~|~\Phi_2) &:=& \frac{S_U(\Phi_1\land\Phi_2)}{S_L(\Phi_2)}.
\end{eqnarray*}

\paragraph{Protein Synthesis  \cite{GossP98}}
We analyze the MPM encoding protein synthesis, as depicted in Table
\ref{tab:proteinsyn_tc}. In biological cells, each protein ($P$, $x_2$) is encoded by a
gene ($G$, $x_1$). If the gene is active ($G=1$), the corresponding protein will be
synthesized with rate $\nu=1.0$. Proteins may degenerate with rate $\delta=0.02$ and thus 
disappear after a time. The gene switches from active state to inactive ($G=0$) with rate $\mu=5.0$
and vice versa with rate $\lambda=1.0$. Note that in a previous paper, this
model has been presented as a stochastic Petri net (SPN). Often, transition class models 
and SPN (without zero-arcs) can trivially be encoded within each other.
\begin{table}[htdp]
\begin{center}
\begin{tabular}{|p{0.5cm}|p{2cm}|p{2cm}|p{2cm}|}
\hline
\multicolumn{1}{|c|}{$j$\tchead} & \multicolumn{1}{c|}{$\tau_j$} & \multicolumn{1}{c|}{$\alpha_j(x)$} & \multicolumn{1}{c|}{$v^{(j)}$} \\
\hline
\rtab \tabhsp $1$ & \rtab$(\alpha_1,v_1)$ & \rtab$\lambda (1-x_1)$ & \rtab$e_1^T$  \\
\rtab $2$ & \rtab$(\alpha_2,v_2)$ & \rtab$\mu x_1$         & \rtab$-e_1^T$  \\
\rtab $3$ & \rtab$(\alpha_3,v_3)$ & \rtab$\nu x_1$         & \rtab$e_2^T$     \\
\rtab $4$ & \rtab$(\alpha_4,v_4)$ & \rtab$\delta x_2$      & \rtab$-e_2^T$  \\
\hline
\end{tabular}
\end{center}
\caption{\label{tab:proteinsyn_tc}Transition classes of the protein synthesis model.}
\end{table}

We consider the property that on the long run, given that 
there are more than 20 proteins, a state with 20 or less proteins is most likely 
(with a probability of at least $0.9$) reached within $t$ time units:
\[
\steady_{>p} (\probabilistic_{>0.9}(\Diamond^{[0,t]} P \leq 20) ~|~ P
> 20).
\]

\begin{table*}[tbh]
  \caption{\label{tab:protein-synthesis-2-results}Protein synthesis results.}
  \begin{center}
    \small{
      \begin{tabular}{|p{1cm}|p{1cm}|p{1cm}|p{2cm}|p{2cm}|p{1cm}|p{3cm}|}
        \hline
        \multicolumn{1}{|c|}{\multirow{2}{*}{$\epsilon$}} & \multicolumn{1}{c|}{\multirow{2}{*}{$t$}} & \multicolumn{1}{c|}{\multirow{2}{*}{depth}} & \multicolumn{2}{c}{time}  & \multicolumn{1}{|c|}{\multirow{2}{*}{$n$}}   & \rtab probability bounds \\ \cline{4-5}
        &     &       & \ctab \geobound & \ctab \infamy          &       & \rtab [$S_L$, $S_U$] \\ \hline
        
        \rtab\multirow{3}{*}{$10^{-1}$} & \rtab 10 & \rtab 8 & \rtab \multirow{3}{*}{0.9}      & \rtab 0.1  & \rtab 217   & \rtab [0.002, 1.0] \\
                   & \rtab 20  & \rtab 8     &          & \rtab 0.1             & \rtab 217   & \rtab [0.003, 1.0] \\
                   & \rtab 60  & \rtab 8     &          & \rtab 0.1             & \rtab 217   &  \rtab[0.004, 1.0] \\ \hline
                   
        \rtab\multirow{3}{*}{$10^{-3}$}   & \rtab 10 & \rtab 5     & \rtab \multirow{3}{*}{3.2} & \rtab 0.3             & \rtab 1531 & \rtab [0.144, 1.0] \\
                   & \rtab 20  & \rtab 5     &          & \rtab 0.4             & \rtab 1531  & \rtab [0.259, 0.816] \\
                   & \rtab 60  & \rtab 5     &          & \rtab 0.7             & \rtab 1531  & \rtab [0.317, 1.0] \\ \hline
                   
        \rtab\multirow{3}{*}{$10^{-6}$}   & \rtab 10  & \rtab 3 & \rtab \multirow{3}{*}{7.8} & \rtab 34.3            & \rtab 46431 & \rtab [0.451350, 0.454779] \\
                   & \rtab 20  & \rtab 3     &          & \rtab 67.0            & \rtab 46431 & \rtab [0.813116, 0.817401] \\
                   & \rtab 60  & \rtab 3     &          & \rtab 257.5           & \rtab 46431 & \rtab [0.997642, 1.0] \\ \hline 
      \end{tabular}
    }
  \end{center}
\end{table*}
We give results in Table~\ref{tab:protein-synthesis-2-results}. The
shortcut $\canstop$ of Algorithm~\ref{alg:explore-csl} was not used
for the analysis. With ``depth'' we specify the number of states of
the shortest path from the initial state to any other state of the
truncation. The runtimes of \geobound\ and \infamy\ is given in
seconds. The rate of decay of proteins depends on the number of
proteins existing. For the states on the border of $\win$, we have
large rates back to existing states. Because of this, for the given
parameters the state space exploration algorithm does not need to
explore further states, and the total number of states explored $n$
does not increase with the time bound. To obtain different $n$, we
would have needed to choose extremely large time bounds, for which
analysis would be on the one hand infeasible and on the other hand
would lead to results extremely close to $1$. The lower and upper
bounds are further away than $\epsilon$. This results, because we have
to divide by $S_U(\Phi_2)$ for the lower and by $S_L(\Phi_2)$ for the
lower bound. In turn, this may lead to a much larger error than
$\epsilon$.

\paragraph{Gene Expression~\cite{Thattai}}
Next, we analyze a network of chemical reactions where a gene is transcribed into 
mRNA ($M$) with rate $\lambda=25.0$ and the mRNA is translated into proteins ($P$) with
rate $\mu=1.0$. Both populations can degrade at rates $\delta_M=2.0$ and $\delta_P=1.0$, respectively. 
The corresponding transition classes are listed in Table~\ref{tab:geneexp_tc}. 
\begin{table}[htdp]
\begin{center}
\renewcommand{\arraystretch}{1.2}
\begin{tabular}{|p{0.5cm}|p{2cm}|p{2cm}|p{2cm}|}
\hline
\multicolumn{1}{|c|}{$j$\tchead} & \multicolumn{1}{c|}{$\tau_j$} & \multicolumn{1}{c|}{$\alpha_j(x)$} & \multicolumn{1}{c|}{$v^{(j)}$} \\
\hline
\rtab$1$\tabhsp & \rtab$(\alpha_1,v_1)$ & \rtab$\lambda$      & \rtab$e_1^T$     \\
\rtab$2$ & \rtab$(\alpha_2,v_2)$ & \rtab$\mu x_1$      & \rtab$e_2^T$    \\
\rtab$3$ & \rtab$(\alpha_3,v_3)$ & \rtab$\delta_M x_1$ & \rtab$-e_1^T$   \\
\rtab$4$ & \rtab$(\alpha_4,v_4)$ & \rtab$\delta_P x_2$ & \rtab$-e_2^T$    \\  
\hline
\end{tabular}
\end{center}
\caption{\label{tab:geneexp_tc}Transition classes of the gene expression model.}
\end{table}
The property of interest is the steady-state probability of
leaving a certain set of states $W$ enclosing more than 80\%
of the steady-state probability mass most likely within $t$ time units,
i.e.,
\[
\steady_{>p} (\probabilistic_{>0.9}(\Diamond^{[0,t]} \lnot W) ~|~ W),
\]
where
\[
W := M>5 \land M<20 \land P>5 \land P<20
\]
with $\steady_{>0.8} (W)=\vtrue$.

\begin{table*}[tbh]
  \caption{\label{tab:transcription-2-results}Gene expression results.}
  \begin{center}
    \small{
      \begin{tabular}{|p{1.3cm}|p{1cm}|p{1cm}|p{2cm}|p{2cm}|p{1cm}|p{3cm}|}
        \hline
        \multicolumn{1}{|c|}{\multirow{2}{*}{$\epsilon$}} & \multicolumn{1}{c|}{\multirow{2}{*}{$t$}} & \multicolumn{1}{c|}{\multirow{2}{*}{depth}} & \multicolumn{2}{c}{time}  & \multicolumn{1}{|c|}{\multirow{2}{*}{$n$}}   & \rtab probability bounds \\ \cline{4-5}
        &     &       & \ctab \geobound & \ctab \infamy          &       & \rtab [$S_L$, $S_U$] \\ \hline
        
        \rtab\multirow{3}{*}{$10^{-1}$} & \rtab 2   & \rtab 24     & \rtab \multirow{3}{*}{3.4} & \rtab 5.2 & \rtab 2558 & \rtab [0.01, 0.2] \\
                   & \rtab 4   & \rtab 24     &          & \rtab 6.0  & \rtab 2558   & \rtab [0.3, 0.6] \\
                   & \rtab 8   & \rtab 24     &          & \rtab 8.5  & \rtab 2558   & \rtab [0.8, 1.0] \\ \hline
                   
        \rtab\multirow{3}{*}{$5\cdot 10^{-2}$} & \rtab 2   & \rtab 20  & \rtab \multirow{3}{*}{6.1} & \rtab 11.9 & \rtab 3663  & \rtab [0.015, 0.078] \\
                   & \rtab 4   & \rtab 20     &          & \rtab 15.4 & \rtab 3663  & \rtab [0.34, 0.46] \\
                   & \rtab 8   & \rtab 20     &          & \rtab 22.1 & \rtab 3663  & \rtab [0.90, 1.0] \\ \hline
                   
        \rtab\multirow{3}{*}{$10^{-2}$}       & \rtab 2   & \rtab 15   & \rtab \multirow{3}{*}{8.5}      & \rtab 99.3 & \rtab 11736 & \rtab [0.015, 0.029] \\
                   & \rtab 4   & \rtab 15     &          & \rtab 139.9 & \rtab 11736 & \rtab [0.37, 0.40] \\
                   & \rtab 8   & \rtab 15     &          & \rtab 219.5  & \rtab 11736 & \rtab [0.97, 1.0] \\ \hline 
      \end{tabular}
    }
  \end{center}
\end{table*}

\begin{table*}[tbh]
  \caption{\label{tab:transcription-3-results}Gene expression - comparison of methods.}
  \begin{center}
    \small{
      \begin{tabular}{|p{1cm}|p{1.5cm}|p{1.5cm}|p{1.5cm}|p{1.5cm}|p{1.5cm}|p{1.5cm}|}
        \hline
        \ctab \multirow{2}{*}{$T$} & \multicolumn{2}{c|}{FSP} & \multicolumn{2}{c|}{Advanced}
        & \multicolumn{2}{c|}{Advanced+AP} \\ \cline{2-7}
        &    \ctab time & \ctab $n$ & \ctab time & \ctab $n$ & \ctab time & \ctab $n$ \\ \hline
        \rtab 1  & \rtab 0.3  & \rtab 1223  & \rtab 0.5  & \rtab 803   & \rtab 0.3  & \rtab 495 \\
        \rtab 2  & \rtab 0.8  & \rtab 1889  & \rtab 1.1  & \rtab 1257  & \rtab 0.6  & \rtab 838 \\
        \rtab 3  & \rtab 1.2  & \rtab 2209  & \rtab 1.5  & \rtab 1460  & \rtab 0.8  & \rtab 945 \\
        \rtab 4  & \rtab 1.5  & \rtab 2344  & \rtab 1.9  & \rtab 1557  & \rtab 0.9  & \rtab 971 \\
        \rtab 5  & \rtab 1.8  & \rtab 2483  & \rtab 2.1  & \rtab 1610  & \rtab 0.9  & \rtab 974 \\
        \rtab 6  & \rtab 2.1  & \rtab 2483  & \rtab 2.4  & \rtab 1647  & \rtab 1.0  & \rtab 974 \\
        \rtab 7  & \rtab 2.4  & \rtab 2554  & \rtab 2.7  & \rtab 1674  & \rtab 1.1  & \rtab 974 \\
        \rtab 8  & \rtab 2.7  & \rtab 2554  & \rtab 2.9  & \rtab 1690  & \rtab 1.2  & \rtab 974 \\
        \rtab 9  & \rtab 3.0  & \rtab 2626  & \rtab 3.2  & \rtab 1707  & \rtab 1.2  & \rtab 974 \\
        \rtab 10 & \rtab 3.2  & \rtab 2626  & \rtab 3.3  & \rtab 1720  & \rtab 1.3  & \rtab 974 \\
        \hline
      \end{tabular}
    }
  \end{center}
\end{table*}

The results are stated in
Table~\ref{tab:transcription-2-results}. Similar to the protein
synthesis case study, we see that there is no increase in the number
of states, because the window size already comprises enough states for
the transient analysis. In Table~\ref{tab:transcription-3-results} we
consider results for the subformula
$\probabilistic_{>0.9}(\Diamond^{[0,t]} \lnot W)$. We compare the
methods to explore states for the transient until described in this
paper (Advanced) with the finite state projection~\cite{MunskyK06}
(FSP) previously used in \infamy. We see that the time needed is
comparable, but the new algorithm needs to explore less states. This
is the case because with the method introduced here when building the
finite truncation we have more control into which direction we
explore. In contrast, the FSP explores the model into all directions
at the same time, until enough precision is reached. When we use the
shortcut $\canstop$ (Advanced+AP), we will not explore states further
in which $\lnot W$ holds. When exploring the model, for larger time
bounds there is some point at which there are almost only states of which all
successors have been completely explored and states for which $\lnot
W$ holds. Thus, the maximal number of states to be explored is
constant with this optimization, except for very large time bounds.

For the protein synthesis, there is almost no difference in the number
of states needed by the new method and FSP. Because it has only one
infinite variable, there is just one direction to be explored. Thus,
the new method performs worse, as it needs more effort to explore
into this direction.

Using the shortcut method also allows us to handle the formula
$\probabilistic_{>0.9}(\Diamond \lnot W)$, involving the unbounded
until operator. Using a precision of $10^{-6}$, we needed a total
time of 1.5 seconds, reached $974$ states and obtained a reachability
probability almost $1.0$. Using a precision of $10^{-10}$, we needed
3.7 seconds and explored $1823$ states. For the computation of
unbounded until probabilities, efficient specialized algorithms were
used, which explains that we needed less time than for some of the
time bounded experiments.

\paragraph{Exclusive Switch \cite{exswitch}}
The exclusive switch is a gene regulatory network with one promotor region 
shared by two genes. That promotor region can either be unbound ($G=1, G.P_1 = 0, G.P_2 = 0$)
or bound by a protein expressed by gene 1 ($G=0, G.P_1=1, G.P_2=0$)
or gene 2 ($G=0, G.P_1=0, G.P_2=1$). If the promotor is unbound,
both proteins are expressed, each with rate $\rho=0.05$, otherwise only
the protein that is currently bound to the promotor is produced
at rate $\rho$. Proteins degrade at rate $\delta=0.005$. Binding happens
at rate $\lambda=0.01$ and unbinding at rate $\mu=0.008$. The transition class
structure is given in Table \ref{tab:exswitch_tc}.

\begin{table}[htdp]
\begin{center}
\begin{tabular}{|p{0.5cm}|p{2cm}|p{2cm}|p{2cm}|} \hline
\multicolumn{1}{|c|}{$j$\tchead}  & \multicolumn{1}{c|}{$\tau_j$} & \multicolumn{1}{c|}{$\alpha_j(x)$} & \multicolumn{1}{c|}{$v^{(j)}$} \\ \hline

\rtab$1$\tabhsp & \rtab$(\alpha_1,v_1)$       & \rtab$\rho x_3$        & \rtab$e_1^T$  \\
\rtab$2$  & \rtab$(\alpha_2,v_2)$       & \rtab$\rho x_3$        & \rtab$e_2^T$ \\
\rtab$3$  & \rtab$(\alpha_3,v_3)$       & \rtab$\delta x_1$      & \rtab$-e_1^T$  \\
\rtab$4$  & \rtab$(\alpha_4,v_4)$       & \rtab$\delta x_2$      & \rtab$-e_2^T$  \\
\rtab$5$  & \rtab$(\alpha_5,v_5)$       & \rtab$\lambda x_1 x_3$ & \rtab$(-e_1 - e_3 + e_4)^T$\\
\rtab$6$  & \rtab$(\alpha_6,v_6)$       & \rtab$\lambda x_2 x_3$ & \rtab$(-e_2 - e_3 + e_5)^T$\\
\rtab$7$  & \rtab$(\alpha_7,v_7)$       & \rtab$\mu x_4$         & \rtab$(-e_4 + e_1 + e_3)^T$\\
\rtab$8$  & \rtab$(\alpha_8,v_8)$       & \rtab$\mu x_5$         & \rtab$(-e_5 + e_2 + e_3)^T$\\
\rtab$9$  & \rtab$(\alpha_9,v_9)$       & \rtab$\rho x_4$        & \rtab$e_1^T$  \\
\rtab$10$ & \rtab$(\alpha_{10},v_{10})$  & \rtab$\rho x_5$        & \rtab$e_2^T$ \\ \hline
\end{tabular}
\end{center}
\caption{\label{tab:exswitch_tc}Transition classes of the exclusive switch.}
\end{table}

This system has two attractor regions, i.e., two spatial maxima in the
steady-state probability distribution over the protein levels, one at 
$P_1=10, P_2=0$ and the other one at $P_1=0, P_2=10$. We are interested
in the switching time between these two regions. For this, we 
estimate the time needed for a 90\%-quantile of the steady-state probability mass
of one of the two attractors to reach the other attractor region. More 
precisely, let
\begin{eqnarray*}
\mathit{start} &:=& ||(P_1,P_2) - (10,0)||_2^2 \leq 4 \\
\mathit{end} &:=& ||(P_1,P_2) - (0,10)||_2^2 \leq 4.
\end{eqnarray*}
Then, the formula to check is
\[
\steady_{>p} (\probabilistic_{>0.9}(\Diamond^{[0,t]} \mathit{end})~|~ \mathit{start}) .
\]
Note that since the model is symmetric we only have to check one formula
from one attractor to the other.
The corresponding results are depicted in Table \ref{tab:exswitch-2-results}.
\begin{table*}[tbh]
  \caption{\label{tab:exswitch-2-results}Exclusive switch results.}
  \begin{center}
    \small{
      \begin{tabular}{|p{1.3cm}|p{1cm}|p{1cm}|p{2cm}|p{2cm}|p{1cm}|p{3cm}|}
        \hline
        \multicolumn{1}{|c|}{\multirow{2}{*}{$\epsilon$}} & \multicolumn{1}{c|}{\multirow{2}{*}{$t$}} & \multicolumn{1}{c|}{\multirow{2}{*}{depth}} & \multicolumn{2}{c}{time}  & \multicolumn{1}{|c|}{\multirow{2}{*}{$n$}}   & \rtab probability bounds \\ \cline{4-5}
        &     &       & \ctab \geobound & \ctab \infamy          &       & \rtab [$S_L$, $S_U$] \\ \hline
        
        \rtab\multirow{4}{*}{$10^{-1}$}  & \rtab 7700   & \rtab 14 & \rtab \multirow{4}{*}{5.8} & \rtab 47.9 & \rtab 3414 & \rtab [0.2, 0.7] \\
                   & \rtab 7800   & \rtab 14 &         & \rtab 48.0 & \rtab 3414  & \rtab [0.3, 0.9] \\
                   & \rtab 7900   & \rtab 14 &         & \rtab 47.6 & \rtab 3414  & \rtab [0.5, 1.0] \\ 
                   & \rtab 8000   & \rtab 14 &         & \rtab 48.1 & \rtab 3414  & \rtab [0.6, 1.0] \\ \hline
                   
        \rtab\multirow{4}{*}{$5\cdot 10^{-2}$} & \rtab 7700 & \rtab 12 & \rtab \multirow{4}{*}{6.9}      & \rtab 128.3 & \rtab 4848 & \rtab [0.26, 0.49] \\
                   & \rtab 7800   & \rtab 12 &         & \rtab 129.7       & \rtab 4848  & \rtab [0.43, 0.70] \\
                   & \rtab 7900   & \rtab 12 &         & \rtab 130.5       & \rtab 4848  & \rtab [0.64, 0.98] \\
                   & \rtab 8000   & \rtab 12 &         & \rtab 131.0       & \rtab 4848  & \rtab [0.83, 1.0] \\ \hline
                   
        \rtab\multirow{4}{*}{$10^{-2}$} & \rtab 7700    & \rtab 8 & \rtab \multirow{4}{*}{86.2} & \rtab 1881.6 & \rtab 14806 & \rtab [0.30, 0.35] \\
                   & \rtab 7800   & \rtab 8 &          & \rtab 1904.5      & \rtab 14806 & \rtab [0.50, 0.56] \\
                   & \rtab 7900   & \rtab 8 &          & \rtab 1930.1      & \rtab 14806 & \rtab [0.75, 0.82] \\
                   & \rtab 8000   & \rtab 8 &          & \rtab 1942.9      & \rtab 14806 & \rtab [0.96, 1.0] \\ \hline 
      \end{tabular}
    }
  \end{center}
\end{table*}

From these results we may conclude that in half of the cases, most likely the switching time 
between the attractor regions is at most 7800 time units, while in almost all cases the
switching time is most likely below 8000 time units, assuming the system has stabilized to a 
steady state.

\section{Related Work} 
\label{sec:related}
The techniques of this paper are derived from combinations of our
previous works~\cite{HahnHWZ09,HahnHWZ09a} and \cite{dayar-hermanns-spieler-wolf10,geobound}. 
This work has been inspired and is related by a number of other works.

Finite state projection (FSP) by Munsky and Khammash~\cite{MunskyK06}
is closely related. The method also works on building a finite
truncation of the original model. The proofs given work for general
truncations, but in their publications they always use an algorithm
which explores the model in a breadth-first way. They consider
time-bounded reachability and no logic like CSL. Adaptive
uniformization for CTMCs was introduced by van Moorsel and
Sanders~\cite{MoorselS94}. In this approach uniformization is
recalibrated to perform well when exploring the state space on the
fly. Remke et al.~\cite{RemkeHC07} have developed algorithms for model
checking CSL against infinite-state CTMCs of Quasi-birth-death processes (QBDs) and 
Jackson queuing networks (JQNs). The
systems to which the method is applicable are less general, but the
approach is less expensive than our method. To the best of the authors
knowledge, the method to bound the steady-state probabilities using
Lyapunov functions is used here for the first time for general MPMs.

\section{Conclusion}
\label{sec:conclusion}
In this paper, we have shown how to model check CSL on Markov
population models of infinite size. Without the steady-state operator,
the method is also applicable for general CTMC derived from a
high-level specification. We have evaluated our method on models from
the biological domain. The method extends previous
related publications by means to check the steady-state operator and
gives guarantees for the truth value obtained.

As future work we plan to integrate these methods into our probabilistic online model checker:
\iscasmc:
\begin{center}
\url{http://iscasmc.ios.ac.cn/IscasMC/}
\end{center}

\bibliographystyle{eptcs}
\bibliography{bib}

\begin{thebibliography}{10}
\providecommand{\bibitemdeclare}[2]{}
\providecommand{\surnamestart}{}
\providecommand{\surnameend}{}
\providecommand{\urlprefix}{Available at }
\providecommand{\url}[1]{\texttt{#1}}
\providecommand{\href}[2]{\texttt{#2}}
\providecommand{\urlalt}[2]{\href{#1}{#2}}
\providecommand{\doi}[1]{doi:\urlalt{http://dx.doi.org/#1}{#1}}
\providecommand{\bibinfo}[2]{#2}

\bibitemdeclare{book}{Anderson91}
\bibitem{Anderson91}
\bibinfo{author}{William~J. \surnamestart Anderson\surnameend}
  (\bibinfo{year}{1991}): \emph{\bibinfo{title}{Continuous-Time {M}arkov
  Chains: An Applications-Oriented Approach}}.
\newblock \bibinfo{publisher}{Springer Verlag},
  \doi{10.1007/978-1-4612-3038-0}.

\bibitemdeclare{article}{AzizSSB00}
\bibitem{AzizSSB00}
\bibinfo{author}{Adnan \surnamestart Aziz\surnameend}, \bibinfo{author}{Kumud
  \surnamestart Sanwal\surnameend}, \bibinfo{author}{Vigyan \surnamestart
  Singhal\surnameend} \& \bibinfo{author}{Robert~K. \surnamestart
  Brayton\surnameend} (\bibinfo{year}{2000}):
  \emph{\bibinfo{title}{Model-Checking Continuous-Time {M}arkov Chains}}.
\newblock {\sl \bibinfo{journal}{ACM Trans. Comput. Log}}
  \bibinfo{volume}{1}(\bibinfo{number}{1}), pp. \bibinfo{pages}{162--170},
  \doi{10.1145/343369.343402}.

\bibitemdeclare{article}{BaierHHK03}
\bibitem{BaierHHK03}
\bibinfo{author}{Christel \surnamestart Baier\surnameend},
  \bibinfo{author}{Boudewijn~R. \surnamestart Haverkort\surnameend},
  \bibinfo{author}{Holger \surnamestart Hermanns\surnameend} \&
  \bibinfo{author}{Joost-Pieter \surnamestart Katoen\surnameend}
  (\bibinfo{year}{2003}): \emph{\bibinfo{title}{Model-Checking Algorithms for
  Continuous-Time {M}arkov Chains}}.
\newblock {\sl \bibinfo{journal}{IEEE Trans. Software Eng}}
  \bibinfo{volume}{29}(\bibinfo{number}{6}), pp. \bibinfo{pages}{524--541},
  \doi{10.1109/TSE.2003.1205180}.

\bibitemdeclare{inproceedings}{BMM09}
\bibitem{BMM09}
\bibinfo{author}{P.~\surnamestart Ballarini\surnameend},
  \bibinfo{author}{R.~\surnamestart Mardare\surnameend} \&
  \bibinfo{author}{I.~\surnamestart Mura\surnameend} (\bibinfo{year}{2009}):
  \emph{\bibinfo{title}{Analysing Biochemical Oscillation through Probabilistic
  Model Checking}}.
\newblock In: {\sl \bibinfo{booktitle}{Proc. 2nd Workshop From Biology to
  Concurrency and Back (FBTC'08)}}, {\sl \bibinfo{series}{Electronic Notes in
  Theoretical Computer Science}} \bibinfo{volume}{229 (issue 1)},
  \bibinfo{publisher}{Elsevier}, pp. \bibinfo{pages}{3--19},
  \doi{10.1016/j.entcs.2009.02.002}.

\bibitemdeclare{article}{courtoissemal}
\bibitem{courtoissemal}
\bibinfo{author}{P.-J. \surnamestart Courtois\surnameend} \&
  \bibinfo{author}{P~\surnamestart Semal\surnameend} (\bibinfo{year}{1984}):
  \emph{\bibinfo{title}{Bounds for the Positive {E}igenvectors of Nonnegative
  Matrices and for their Approximations by Decomposition}}.
\newblock {\sl \bibinfo{journal}{J. ACM}}
  \bibinfo{volume}{31}(\bibinfo{number}{4}), pp. \bibinfo{pages}{804--825},
  \doi{10.1145/1634.1637}.

\bibitemdeclare{inproceedings}{courtois}
\bibitem{courtois}
\bibinfo{author}{Pierre-Jacques \surnamestart Courtois\surnameend}
  (\bibinfo{year}{1985}): \emph{\bibinfo{title}{Analysis of Large {M}arkovian
  Models by Parts. {A}pplications to Queueing Network Models}}.
\newblock In: {\sl \bibinfo{booktitle}{Messung, Modellierung und Bewertung von
  Rechensystemen, 3. GI/NTG-Fachtagung}}, pp. \bibinfo{pages}{1--10},
  \doi{10.1007/978-3-642-87472-7_1}.

\bibitemdeclare{article}{dayar-hermanns-spieler-wolf10}
\bibitem{dayar-hermanns-spieler-wolf10}
\bibinfo{author}{Tu{\u{g}}rul \surnamestart Dayar\surnameend},
  \bibinfo{author}{Holger \surnamestart Hermanns\surnameend},
  \bibinfo{author}{David \surnamestart Spieler\surnameend} \&
  \bibinfo{author}{Verena \surnamestart Wolf\surnameend}
  (\bibinfo{year}{2011}): \emph{\bibinfo{title}{Bounding the equilibrium
  distribution of {Markov} population models}}.
\newblock {\sl \bibinfo{journal}{Numerical Linear Algebra with Applications}}
  \bibinfo{volume}{18}(\bibinfo{number}{6}), pp. \bibinfo{pages}{931--946},
  \doi{10.1002/nla.795}.

\bibitemdeclare{article}{Dijk88}
\bibitem{Dijk88}
\bibinfo{author}{Nico~M. \surnamestart van Dijk\surnameend}
  (\bibinfo{year}{1988}): \emph{\bibinfo{title}{On the finite horizon {B}ellman
  equation for controlled {M}arkov jump models with unbounded
  characteristics}}.
\newblock {\sl \bibinfo{journal}{Stochastic Proc. Appl}} \bibinfo{volume}{28},
  pp. \bibinfo{pages}{141--157}, \doi{10.1016/0304-4149(88)90071-3}.

\bibitemdeclare{article}{EmersonC82}
\bibitem{EmersonC82}
\bibinfo{author}{E.~Allen \surnamestart Emerson\surnameend} \&
  \bibinfo{author}{Edmund~M. \surnamestart Clarke\surnameend}
  (\bibinfo{year}{1982}): \emph{\bibinfo{title}{Using Branching Time Temporal
  Logic to Synthesize Synchronization Skeletons}}.
\newblock {\sl \bibinfo{journal}{Sci. Comput. Program}}
  \bibinfo{volume}{2}(\bibinfo{number}{3}), pp. \bibinfo{pages}{241--266},
  \doi{10.1016/0167-6423(83)90017-5}.

\bibitemdeclare{article}{glynn}
\bibitem{glynn}
\bibinfo{author}{P.~\surnamestart Glynn\surnameend} \&
  \bibinfo{author}{A.~\surnamestart Zeevi\surnameend} (\bibinfo{year}{2008}):
  \emph{\bibinfo{title}{Bounding stationary expectations of {M}arkov
  processes}}.
\newblock {\sl \bibinfo{journal}{IMS Collections: Markov Processes and Related
  Topics}} \bibinfo{volume}{4}, pp. \bibinfo{pages}{195--214},
  \doi{10.1214/074921708000000381}.

\bibitemdeclare{inproceedings}{Grassmann91}
\bibitem{Grassmann91}
\bibinfo{author}{Winfried~K. \surnamestart Grassmann\surnameend}
  (\bibinfo{year}{1991}): \emph{\bibinfo{title}{Finding Transient Solutions in
  {M}arkovian Event Systems Through Randomization}}.
\newblock In: {\sl \bibinfo{booktitle}{Numerical Solution of {M}arkov Chains}},
  pp. \bibinfo{pages}{357--371}.

\bibitemdeclare{article}{GossP98}
\bibitem{GossP98}
\bibinfo{author}{Peter J.~E. \surnamestart Gross\surnameend} \&
  \bibinfo{author}{Jean \surnamestart Peccoud\surnameend}
  (\bibinfo{year}{1998}): \emph{\bibinfo{title}{Quantitative modeling of
  stochastic systems in molecular biology by using stochastic {P}etri nets}}.
\newblock {\sl \bibinfo{journal}{Proc. Natl. Acad. Sci.}} \bibinfo{volume}{95},
  pp. \bibinfo{pages}{6750--6755}, \doi{10.1073/pnas.95.12.6750}.

\bibitemdeclare{article}{phom}
\bibitem{phom}
\bibinfo{author}{Takayuki \surnamestart Gunji\surnameend},
  \bibinfo{author}{Sunyoung \surnamestart Kim\surnameend},
  \bibinfo{author}{Masakazu \surnamestart Kojima\surnameend},
  \bibinfo{author}{Akiko \surnamestart Takeda\surnameend},
  \bibinfo{author}{Katsuki \surnamestart Fujisawa\surnameend} \&
  \bibinfo{author}{Tomohiko \surnamestart Mizutani\surnameend}
  (\bibinfo{year}{2004}): \emph{\bibinfo{title}{{PHoM} -- a Polyhedral Homotopy
  Continuation Method for Polynomial Systems}}.
\newblock {\sl \bibinfo{journal}{Computing}}
  \bibinfo{volume}{73}(\bibinfo{number}{1}), pp. \bibinfo{pages}{57--77},
  \doi{10.1007/s00607-003-0032-4}.

\bibitemdeclare{inproceedings}{HahnHWZ09}
\bibitem{HahnHWZ09}
\bibinfo{author}{E.~Moritz \surnamestart Hahn\surnameend},
  \bibinfo{author}{Holger \surnamestart Hermanns\surnameend},
  \bibinfo{author}{Bj{\"o}rn \surnamestart Wachter\surnameend} \&
  \bibinfo{author}{Lijun \surnamestart Zhang\surnameend}
  (\bibinfo{year}{2009}): \emph{\bibinfo{title}{{INFAMY}: An Infinite-State
  {M}arkov Model Checker}}.
\newblock In: {\sl \bibinfo{booktitle}{CAV}}, \bibinfo{publisher}{Springer},
  pp. \bibinfo{pages}{641--647}, \doi{10.1007/978-3-642-02658-4_49}.

\bibitemdeclare{article}{HahnHWZ09a}
\bibitem{HahnHWZ09a}
\bibinfo{author}{E.~Moritz \surnamestart Hahn\surnameend},
  \bibinfo{author}{Holger \surnamestart Hermanns\surnameend},
  \bibinfo{author}{Bj\"orn \surnamestart Wachter\surnameend} \&
  \bibinfo{author}{Lijun \surnamestart Zhang\surnameend}
  (\bibinfo{year}{2009}): \emph{\bibinfo{title}{Time-Bounded Model Checking of
  Infinite-State Continuous-Time {Markov} Chains}}.
\newblock {\sl \bibinfo{journal}{Fundamenta Informaticae}}
  \bibinfo{volume}{95}, pp. \bibinfo{pages}{129--155},
  \doi{10.3233/FI-2009-145}.

\bibitemdeclare{inproceedings}{KatoenKLW07}
\bibitem{KatoenKLW07}
\bibinfo{author}{Joost-Pieter \surnamestart Katoen\surnameend},
  \bibinfo{author}{Daniel \surnamestart Klink\surnameend},
  \bibinfo{author}{Martin \surnamestart Leucker\surnameend} \&
  \bibinfo{author}{Verena \surnamestart Wolf\surnameend}
  (\bibinfo{year}{2007}): \emph{\bibinfo{title}{Three-Valued Abstraction for
  Continuous-Time {M}arkov Chains}}.
\newblock In: {\sl \bibinfo{booktitle}{CAV}}, {\sl \bibinfo{series}{Lecture
  Notes in Computer Science}} \bibinfo{volume}{4590},
  \bibinfo{publisher}{Springer}, pp. \bibinfo{pages}{311--324},
  \doi{10.1007/978-3-540-73368-3_37}.

\bibitemdeclare{phdthesis}{Klink10}
\bibitem{Klink10}
\bibinfo{author}{Daniel \surnamestart Klink\surnameend} (\bibinfo{year}{2010}):
  \emph{\bibinfo{title}{Three-Valued Abstraction for Stochastic Systems}}.
\newblock Ph.D. thesis, \bibinfo{school}{RWTH Aachen}.

\bibitemdeclare{article}{hom4ps}
\bibitem{hom4ps}
\bibinfo{author}{T.~L. \surnamestart Lee\surnameend}, \bibinfo{author}{T.~Y.
  \surnamestart Li\surnameend} \& \bibinfo{author}{C.~H. \surnamestart
  Tsai\surnameend} (\bibinfo{year}{2008}): \emph{\bibinfo{title}{{HOM4PS-2.0}:
  a software package for solving polynomial systems by the polyhedral homotopy
  continuation method}}.
\newblock {\sl \bibinfo{journal}{Computing}}
  \bibinfo{volume}{83}(\bibinfo{number}{2-3}), pp. \bibinfo{pages}{109--133},
  \doi{10.1007/s00607-008-0015-6}.

\bibitemdeclare{article}{exswitch}
\bibitem{exswitch}
\bibinfo{author}{A.~\surnamestart Loinger\surnameend},
  \bibinfo{author}{A.~\surnamestart Lipshtat\surnameend},
  \bibinfo{author}{N.~Q. \surnamestart Balaban\surnameend} \&
  \bibinfo{author}{O.~\surnamestart Biham\surnameend} (\bibinfo{year}{2007}):
  \emph{\bibinfo{title}{Stochastic simulations of genetic switch systems}}.
\newblock {\sl \bibinfo{journal}{Physical Review E}}
  \bibinfo{volume}{75}(\bibinfo{number}{2}), p. \bibinfo{pages}{021904},
  \doi{10.1103/PhysRevE.75.021904}.

\bibitemdeclare{article}{MoorselS94}
\bibitem{MoorselS94}
\bibinfo{author}{Aad P.~A. \surnamestart van Moorsel\surnameend} \&
  \bibinfo{author}{William~H. \surnamestart Sanders\surnameend}
  (\bibinfo{year}{1994}): \emph{\bibinfo{title}{Adaptive Uniformization}}.
\newblock {\sl \bibinfo{journal}{Communications in Statistics - Stochastic
  Models}} \bibinfo{volume}{10}(\bibinfo{number}{3}), pp.
  \bibinfo{pages}{619--647}, \doi{10.1080/15326349408807313}.

\bibitemdeclare{article}{MunskyK06}
\bibitem{MunskyK06}
\bibinfo{author}{Brian \surnamestart Munsky\surnameend} \&
  \bibinfo{author}{Mustafa \surnamestart Khammash\surnameend}
  (\bibinfo{year}{2006}): \emph{\bibinfo{title}{The Finite State Projection
  Algorithm for the Solution of the Chemical Master Equation}}.
\newblock {\sl \bibinfo{journal}{Journal of Chemical Physics}}
  \bibinfo{volume}{124}(\bibinfo{number}{044104}), \doi{10.1063/1.2145882}.

\bibitemdeclare{article}{RemkeHC07}
\bibitem{RemkeHC07}
\bibinfo{author}{Anne \surnamestart Remke\surnameend},
  \bibinfo{author}{Boudewijn~R. \surnamestart Haverkort\surnameend} \&
  \bibinfo{author}{Lucia \surnamestart Cloth\surnameend}
  (\bibinfo{year}{2007}): \emph{\bibinfo{title}{{CSL} Model Checking Algorithms
  for {QBD}s}}.
\newblock {\sl \bibinfo{journal}{Theor. Comput. Sci}}
  \bibinfo{volume}{382}(\bibinfo{number}{1}), pp. \bibinfo{pages}{24--41}.
\newblock \urlprefix\url{http://dx.doi.org/10.1016/j.tcs.2007.05.007}.

\bibitemdeclare{article}{geobound}
\bibitem{geobound}
\bibinfo{author}{D.~\surnamestart Spieler\surnameend} (\bibinfo{year}{2011}):
  \emph{\bibinfo{title}{Geobound}}.
\newblock {\sl
  \bibinfo{journal}{\url{http://mosi.cs.uni-saarland.de/?page_id=74}}}.

\bibitemdeclare{book}{Steward94}
\bibitem{Steward94}
\bibinfo{author}{William~J. \surnamestart Stewart\surnameend}
  (\bibinfo{year}{1994}): \emph{\bibinfo{title}{Introduction to the Numerical
  Solution of {M}arkov Chains}}.
\newblock \bibinfo{publisher}{Princeton University Press}.

\bibitemdeclare{article}{Thattai}
\bibitem{Thattai}
\bibinfo{author}{M.~\surnamestart Thattai\surnameend} \&
  \bibinfo{author}{A.~\surnamestart van Oudenaarden\surnameend}
  (\bibinfo{year}{2001}): \emph{\bibinfo{title}{Intrinsic noise in gene
  regulatory networks.}}
\newblock {\sl \bibinfo{journal}{{P}roceedings of the {N}ational {A}cademy of
  {S}cience, USA}} \bibinfo{volume}{98}(\bibinfo{number}{15}), pp.
  \bibinfo{pages}{8614--8619}, \doi{10.1073/pnas.151588598}.

\bibitemdeclare{article}{tweedie}
\bibitem{tweedie}
\bibinfo{author}{R.~\surnamestart Tweedie\surnameend} (\bibinfo{year}{1975}):
  \emph{\bibinfo{title}{Sufficient conditions for regularity, recurrence and
  ergodicity of {M}arkov processes}}.
\newblock {\sl \bibinfo{journal}{Math. Proc. Camb. Phil. Soc.}}
  \bibinfo{volume}{78}, pp. \bibinfo{pages}{125--130},
  \doi{10.1017/S0305004100051562}.

\end{thebibliography}

\end{document}